\title{{\bf A general form of Green Formula and Cauchy Integral Theorem }}
\author{\Large{\Large Juli\`{a} Cuf\'{\i} and Joan Verdera}}
\newtheorem*{teor}{Theorem}
\newtheorem*{teor2}{Theorem (pointwise version)}
\newtheorem*{teor3}{Theorem (Carmona and Cuf\'{\i})}
\newtheorem*{cor}{Corollary}
\newtheorem{co}{Corollary}
\newtheorem{lemma}[co]{Lemma}
\newtheorem*{ML}{Main Lemma}
\theoremstyle{definition}
\newtheorem*{gracies}{Acknowledgements}
\newcommand{\T}{\mathbb{T}}
\newcommand{\C}{\mathbb{C}}
\newcommand{\In}{\operatorname{Ind}}
\newcommand{\Ing}{\operatorname{Ind}(\gamma,z)}
\newcommand{\dbar}{\overline{\partial}}
\begin{document}

\date{}

\maketitle

\begin{abstract}
We prove a general form of Green Formula and Cauchy Integral Theorem
for arbitrary closed rectifiable curves in the plane. We use
Vitushkin's localization of singularities method and a decomposition
of a rectifiable curve in terms of a sequence of Jordan rectifiable
sub-curves due to Carmona and Cuf\'{i}.
\end{abstract}

\section{Introduction}

In this paper we prove a general form of Green Formula and Cauchy
Integral Theorem for arbitrary closed rectifiable curves in the
plane. A closed rectifiable curve is a complex valued mapping
$\gamma$ of bounded variation defined on the unit circle $\T.$ We
adopt the standard abuse of notation consisting in denoting by
$\gamma$ also the image of the unit circle under the mapping. Recall
that the winding number or index of a closed rectifiable curve
$\gamma$ with respect to a point $z \notin \gamma$ is
\begin{equation*}\label{index}
\operatorname{Ind}(\gamma,z)= \frac{1}{2 \pi i} \int_\gamma
\frac{dw}{w-z}.
\end{equation*}

Set
\begin{equation*}\label{D}
D = \{z \in \C: \operatorname{Ind}(\gamma,z)\neq 0 \}
\end{equation*}
and
\begin{equation*}\label{D_0}
D_0 = \{z \in \C: \operatorname{Ind}(\gamma,z)= 0 \}.
\end{equation*}
The sets $D$ and $D_0$ are open and a countable union of connected
components of $\C \setminus \gamma.$
 We let $dA$ stand for planar Lebesgue measure and $\dbar =
\partial/\partial\overline{z}$ for the usual Cauchy-Riemann
operator. We then have the following.

\begin{teor}\label{teorema}
Let $\gamma$ be a closed rectifiable curve and let $f$ be a
continuous function on $D \cup \gamma$ such that the $\dbar$
derivative of $f$ in $D$, in the sense of distributions, belongs to
$L^2(D).$ Then
\begin{equation*}\label{identitat}
\int_\gamma f(z) \,dz = 2 i \int_{D} \dbar f(z)\In(\gamma,z)\,d
A(z).
\end{equation*}
\end{teor}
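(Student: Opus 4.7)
The plan is to combine Vitushkin's localization of singularities with the Carmona-Cuf\'{\i} decomposition of $\gamma$ into Jordan rectifiable sub-curves. The underlying heuristic is clean: were $f$ continuous on $\C$ with compactly supported $L^1$ derivative $\dbar f$, the Cauchy representation
\[
f(z)=\frac{1}{\pi}\int\frac{\dbar f(w)}{z-w}\,dA(w),
\]
inserted into $\int_\gamma f(z)\,dz$ and combined with Fubini and $\int_\gamma dz/(z-w)=2\pi i\,\In(\gamma,w)$, would immediately produce the identity. The actual work is to justify this manipulation under the weaker hypotheses of the statement, namely that $f$ is only defined on $D\cup\gamma$ and $\dbar f$ only belongs to $L^2(D)$.

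To localize, I would first extend $f$ continuously to all of $\C$ by Tietze, then fix a small parameter $\delta>0$, cover a neighborhood of the compact set $\overline{D}$ by finitely many open disks $B_i$ of diameter $\delta$, and choose a smooth partition of unity $\{\varphi_i\}$ subordinate to $\{B_i\}$ with $\sum_i\varphi_i\equiv 1$ on a neighborhood of $\overline{D}$. For each $i$ form the Vitushkin localized function
\[
V_{\varphi_i}f(z):=\varphi_i(z)f(z)+\frac{1}{\pi}\int\frac{f(w)-f(z)}{w-z}\,\dbar\varphi_i(w)\,dA(w).
\]
Standard properties of Vitushkin's operator give that $V_{\varphi_i}f$ is continuous on $\C$, holomorphic outside $\overline{B_i}$, satisfies $\dbar V_{\varphi_i}f=\varphi_i\dbar f$ on $D$, and admits the global representation $V_{\varphi_i}f(z)=\frac{1}{\pi}\int_D\varphi_i(w)\dbar f(w)/(z-w)\,dA(w)$. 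In particular the function $h:=\sum_i V_{\varphi_i}f-f$ is continuous on $D\cup\gamma$ and holomorphic on $D$.

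For each $i$, Fubini's theorem applied to $\int_\gamma V_{\varphi_i}f(z)\,dz$ using the Cauchy representation yields
\[
\int_\gamma V_{\varphi_i}f(z)\,dz=2i\int_D \varphi_i(w)\,\dbar f(w)\,\In(\gamma,w)\,dA(w);
\]
the swap is legitimate because $\varphi_i\dbar f$ is compactly supported and lies in $L^2(D)$, together with the appropriate integrability of $\In(\gamma,\cdot)$ on $D$ for a rectifiable curve (Cauchy-Schwarz). Summing over the finite family, using $\sum_i\varphi_i\equiv 1$ on $\overline{D}$, and recalling the definition of $h$, one obtains
\[
\int_\gamma f(z)\,dz+\int_\gamma h(z)\,dz=2i\int_D\dbar f(w)\,\In(\gamma,w)\,dA(w).
\]

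The theorem therefore reduces to showing $\int_\gamma h(z)\,dz=0$, that is, to Cauchy's integral theorem for the function $h$, continuous on $D\cup\gamma$ and holomorphic on $D$. This is precisely where the Carmona-Cuf\'{\i} decomposition enters: write $\gamma$ as a countable union of Jordan rectifiable sub-curves $\{\gamma_j\}$, each bounding a Jordan domain $\Omega_j\subset D$ on which $h$ is holomorphic and continuous up to $\gamma_j$, and apply the Carmona-Cuf\'{\i} Cauchy theorem for Jordan rectifiable curves recalled earlier in the paper to get $\int_{\gamma_j}h\,dz=0$; summing over $j$ finishes the argument. I expect the main technical obstacle to be the rigorous justification of the Fubini exchange and the Cauchy representation of $V_{\varphi_i}f$ under the $L^2$ hypothesis on $\dbar f$, which is the reason that precise integrability assumption appears in the statement.
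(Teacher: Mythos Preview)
Your argument has a genuine gap at the ``global representation'' step. You claim that
\[
V_{\varphi_i}f(z)=\frac{1}{\pi}\int_D\frac{\varphi_i(w)\,\dbar f(w)}{z-w}\,dA(w),
\]
with the integral taken only over $D$. This is false whenever $B_i$ meets $\gamma$ or $D_0$. The Vitushkin operator, applied to the Tietze extension $\tilde f$, satisfies $V_{\varphi_i}\tilde f=\dfrac{1}{\pi z}*(\varphi_i\,\dbar\tilde f)$ where $\dbar\tilde f$ is the distributional derivative of the \emph{extension} on all of $\C$. On $D$ this distribution agrees with the given $L^2$ function, but on $B_i\cap D_0$ the Tietze extension is an arbitrary continuous function, certainly not holomorphic, and on $B_i\cap\gamma$ there may be a singular part. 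Your formula discards both contributions. Consequently the Fubini computation that produces
\[
\int_\gamma V_{\varphi_i}f(z)\,dz=2i\int_D \varphi_i(w)\,\dbar f(w)\,\In(\gamma,w)\,dA(w)
\]
is unjustified precisely for the disks that touch $\gamma$, and these are the ones that carry the whole difficulty. Put differently, the identity you want for $V_{\varphi_i}f$ is nothing other than the Theorem itself applied to $V_{\varphi_i}f$, so the reduction is circular.

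This is exactly the obstacle the paper's proof confronts head-on: it separates the disks into those contained in $D$, those contained in $D_0$, and those meeting $\gamma$; the last class is controlled not by Fubini but by the Main Lemma (a geometric estimate on $\int_{\Gamma\cap(\overline\Delta)^c}h\,dz$ for Jordan $\Gamma$) together with the Carmona--Cuf\'{\i} decomposition, yielding a bound $C\,\omega(f,\delta)\,l(\gamma)+\eta(\delta)\to 0$. There is also a secondary circularity in your endgame: the statement $\int_\gamma h\,dz=0$ for $h$ continuous on $D\cup\gamma$ and holomorphic on $D$ is the Corollary, which the paper derives \emph{from} the Theorem, and there is no ``Carmona--Cuf\'{\i} Cauchy theorem'' stated earlier to invoke. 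Even granting the classical Jordan-curve case, the decomposition does not immediately give $\Omega_j\subset D$ with $h$ continuous on $\overline{\Omega_j}$ without further work, and in any event the primary gap above already breaks the argument.
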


If $D$ is empty then the right hand side in \eqref{identitat} is $0$
and the identity is straightforward. Notice that the integral in the
right hand side is absolutely convergent because the function $\Ing$
is in $L^2(\C).$ This was proven with the best constant in
\cite{CC1}. It is also a consequence of the Sobolev imbedding
Theorem for $p=1$ and the fact that $\Ing$ is a function of bounded
variation. Indeed
\begin{equation*}\label{variaciofitada}
\dbar (\Ing) = \frac{dz}{2 i} \hspace{0.6cm}\text{and}
\hspace{0.6cm}
\partial(\Ing)= - \frac{\overline{dz}}{2 i}.
\end{equation*}
 It is not true in general that $\Ing \in L^p(\C)$ for some $p > 2.$
 Our proof works also under the assumption that $\dbar f(z)\In(\gamma,z) \in L^1(D).$
\begin{cor}\label{cor}
Let $\gamma$ be a closed rectifiable curve and let $f$ be a
holomorphic function on $D$ which is continuous on $D \cup \gamma.$
Then
$$
\int_\gamma f(z) \,dz = 0.
$$
\end{cor}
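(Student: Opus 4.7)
The plan is to obtain the corollary as an immediate specialization of Theorem~\ref{teorema}. All the substantive analytic work has already been done in establishing that theorem; here one just needs to check that its hypotheses apply to our $f$ and then observe that the right-hand side of the displayed identity vanishes identically.

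Concretely, I would proceed as follows. Continuity of $f$ on $D\cup\gamma$ is given. Since $f$ is holomorphic on the open set $D$, it is $C^\infty$ there and its classical pointwise $\dbar$-derivative is identically zero on $D$; because the classical derivative of a smooth function coincides with its distributional derivative, this means $\dbar f = 0$ as a distribution on $D$. In particular $\dbar f$ trivially lies in $L^2(D)$ (with vanishing norm), so $f$ satisfies the hypotheses of Theorem~\ref{teorema}. Applying that theorem gives
\[
\int_\gamma f(z)\,dz \;=\; 2i \int_D \dbar f(z)\,\In(\gamma,z)\,dA(z) \;=\; 0,
\]
as the integrand is zero almost everywhere on $D$.

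There is essentially no obstacle to overcome: the only point worth articulating carefully is that holomorphy of $f$ on $D$ yields the vanishing of the \emph{distributional} $\dbar$-derivative on $D$ (not merely the classical one), since Theorem~\ref{teorema} is phrased in terms of distributional derivatives. Once this trivial identification is made, the conclusion is immediate.
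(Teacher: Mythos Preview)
Your proposal is correct and matches the paper's intent: the Corollary is presented as an immediate consequence of the Theorem, with no separate proof given. The only step is precisely the one you identify, that holomorphy on $D$ gives $\dbar f=0$ in the distributional sense, whence the right-hand side of the Theorem's identity vanishes.
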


With the extra hypothesis that $\Ing$ is bounded on $\C \setminus
\gamma$  the Corollary was proven by N\"{o}beling in 1949 \cite{N}. In
fact, the corollary is proven  in \cite{M} as a consequence of an
approximation theorem of $\gamma$ by chains formed by boundaries of
squares contained in $D.$ Michael's approximation theorem coupled
with a regularization argument can be used to give a proof of the
Theorem above. Our proof, which we found before becoming aware of
\cite{M}, keeps the curve fixed and, instead, the function $f$ is
suitably approximated.

Combining the Theorem with a well known result of Fesq \cite{F} (see
also \cite{Co}) one obtains an appealing statement in which no
distributions theory is involved. Fesq proved the following. Assume
that a function $f $ is defined and continuous on an open set
$\Omega$ and has partial derivatives $\partial f/\partial x$ and
$\partial f/\partial y$ at each point of $\Omega \setminus E$, where
$E$ is a countable union of closed sets of finite length (one
dimensional Hausdorff measure). Assume further that the measurable
function $\dbar f (z)= \frac{1}{2}(\frac{\partial f}{\partial x}(z)+
i \frac{\partial f}{\partial y}(z))$ defined  for $z \in \Omega
\setminus E$, belongs to $L_{loc}^1(\Omega)$. We emphasize that now
$\dbar f(z)$ is not defined in the sense of distributions but only
pointwise. Then
\begin{equation}\label{Fesq}
\int_{\partial Q} f(z)\,dz = 2 i \int_Q \dbar f(z)\,dA(z),
\end{equation}
for each square $Q$ with closure contained in $\Omega.$ It is not
difficult to realize that \eqref{Fesq} implies that the pointwise
$\dbar$-derivative of $f$ on $\Omega$ is indeed the distributional
$\dbar$-derivative of $f$ on $\Omega.$ For the sake of completeness
a proof of this simple fact is presented in section 4. Therefore we
obtain the following variation of the Theorem.

\begin{teor2}\label{teorema2}
Let $\gamma$ be a closed rectifiable curve. Let $f$ be a continuous
function on $D \cup \gamma$ whose partial derivatives $\partial
f/\partial  x$ and $\partial f/\partial y$ exist at each point of $D
\setminus E$, where $E$ is a countable union of closed sets of
finite length (one dimensional Hausdorff measure), and such that
$\dbar f \in L^2(D),$ where $\dbar f$ is defined pointwise almost
everywhere on $D$. Then
$$
\int_\gamma f(z) \,dz = 2 i \int_{D} \dbar f(z)\In(\gamma,z)\,d
A(z).
$$
\end{teor2}

In \cite{M} a weaker version of the preceding result is proven under
the extra assumption that $\partial f (z)/\partial x \In(\gamma,z)$
and $\partial f (z)/\partial y \In(\gamma,z)$ are in $L^1(D).$

In section 2 we present the proof of the Theorem and we leave for
section 3 the discussion of the Main Lemma. The main tool in the
proof is the method of separation of singularities due to Vitushkin
(see \cite{G}, \cite{V} or \cite{Vi}). This is quite natural because
in many instances Cauchy Integral Theorem is reduced to the case in
which more regular functions are involved via uniform approximation
of the given data. For instance, if $D$ is a Jordan domain and
$\gamma$ its boundary, then one can approximate $f$, uniformly on
$D$, by polynomials in $z$, for which the result is obvious.

Vitushkin's method produces a large finite sum. The terms in this
sum are divided into three classes and in estimating the class which
involves more directly the curve we decompose $\gamma$ in a sum, in
most cases infinite, of Jordan curves. This decomposition is a
consequence of \cite [Theorem 4] {CC2} and reads as follows.

\begin{teor3}\label{CC}
For each closed rectifiable curve $\gamma$ such that $D \neq
\emptyset$ there exists a sequence (maybe finite) of Jordan curves
$(\gamma_n)_{n=1}^\infty$ with the property that $\gamma_n \subset
\gamma,$ $dz_{\gamma} = \sum_{n=1}^\infty dz_{\gamma_n}$ and
$\sum_{n=1}^\infty l(\gamma_n) \leq l(\gamma).$
\end{teor3}

Here $l(\gamma)$ stands for the length of the curve $\gamma$ and
$dz_{\gamma} = \sum_{n=1}^\infty dz_{\gamma_n}$  means that
$\int_\gamma f(z)\,dz = \sum_{n=1}^\infty \int_{\gamma_n} f(z)\,dz
,$ for any continuous function $f$ on $\gamma.$

The estimate we are looking for is then reduced to the case of a
Jordan curve, which is dealt with in the Main Lemma.

A word on the existing literature on Green Formula and Cauchy
Integral Theorem is in order. Burckel, in his well-known
comprehensive book on Classical Complex Analysis \cite[p. 341]{B},
states that the more general Cauchy Theorem he knows is that due to
N\"{o}beling (\cite{N}), in which the index of the curve is assumed to
be bounded. He seems to be unaware of Michael's article \cite{M},
which is apparently widely unknown. We believe that the general form
of Green Formula and Cauchy Integral Theorem involving arbitrary
rectifiable curves and functions defined in the minimal domain $D
\cup \gamma$ deserves to be better known.

\section{Proof of the Theorem}
We first describe Vitushkin's scheme to separate singularities of
functions. The first step is the construction of partitions of unity
subordinated to special coverings of the plane by discs of equal
radii.

\begin{lemma}\label{particio}
Given any $\delta > 0$ there exists a countable family of discs
$(\Delta_j)$ of radius $\delta$ and a family of functions $
\varphi_j \in C_0^\infty(\Delta_j)$ such that

(i) $\C= \cup_j  \Delta_j$.

(ii) The family $(\Delta_j)$ is almost disjoint, that is, for some
constant $C$ each $z \in \C$ belongs to at most $C$ discs $\Delta_j$
(in fact we can take $C=21$).

(iii) $\sum_{j} \varphi_j = 1, \, 0 \le \varphi_j$ and $|\nabla
\varphi_j(z)| \le C \delta^{-1}, \;\; z\in \C, $  where $C$ is an
absolute constant.
\end{lemma}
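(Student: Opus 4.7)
The plan is to place the $\Delta_j$ as equal-radius discs around the points of a suitable square lattice, then build the $\varphi_j$ by the standard trick of normalizing a family of translated, scaled bump functions.

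For the covering, I fix a small constant $c < 1/\sqrt{2}$, set $\lambda = c\,\delta$, and let $(c_j)$ be an enumeration of the points of $\lambda\,\mathbb{Z}^2 \subset \C$; put $\Delta_j = \{z : |z - c_j| < \delta\}$. Property (i) is immediate because every $z\in\C$ lies at distance at most $\lambda\sqrt{2}/2 < \delta/2$ from some lattice point; in fact we obtain the stronger statement that the half-radius discs $B(c_j,\delta/2)$ already cover $\C$, which I will use below. For (ii) observe that $z \in \Delta_j$ iff $c_j \in B(z,\delta)$, so the overlap at $z$ equals the number of lattice points in a disc of radius $\delta$; a standard area/packing argument (the $\lambda\times \lambda$ fundamental squares attached to these lattice points fit in a slightly enlarged concentric disc) bounds this by a constant depending only on $c$.

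For the partition of unity, fix once and for all a radial $\psi \in C_0^\infty(B(0,1))$ with $0\le \psi \le 1$, $\psi \equiv 1$ on $B(0,1/2)$ and $|\nabla \psi|\le C_0$, and set $\psi_j(z)=\psi\bigl((z-c_j)/\delta\bigr)$. Then $\psi_j \in C_0^\infty(\Delta_j)$ with $|\nabla \psi_j|\le C_0/\delta$, and the strengthened covering gives $S(z) := \sum_j \psi_j(z) \ge 1$ on $\C$ (since some $\psi_j$ equals $1$ at $z$), while (ii) ensures that at each point $S$ has at most a bounded number of nonzero summands, so $S$ is uniformly bounded above and $|\nabla S| \le C/\delta$. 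Defining $\varphi_j = \psi_j/S$ gives nonnegative $C_0^\infty(\Delta_j)$ functions summing to $1$, and the quotient rule together with $S\ge 1$ yields $|\nabla \varphi_j| \le C/\delta$, which is (iii).

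The substantive step is really (i)--(ii): everything else is a mechanical normalization. The only place where one genuinely has to work is the bookkeeping that produces the explicit overlap constant $21$; this reduces to a direct count of how many points of $\lambda\,\mathbb{Z}^2$ lie in an arbitrary disc of radius $\delta$ for the chosen value of $c$, a finite combinatorial check rather than a serious analytic difficulty. Any slightly loose choice of $c$ still gives some absolute constant, which is all the rest of the paper will need.
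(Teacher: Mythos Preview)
Your argument is correct and entirely standard; it is, however, a different construction from the one the paper sketches. The paper starts from the grid of \emph{squares} of side $\delta/2$, takes their characteristic functions (which already form an exact partition of unity), and mollifies each one with a fixed approximate identity supported in a small disc; the mollified functions then automatically satisfy $\sum_j \varphi_j = 1$ by linearity of convolution, are supported in discs of radius $\delta$ centered at the square centers, and inherit the gradient bound from the mollifier. Your route instead places translated bump functions on the lattice and divides by their sum. The paper's approach is marginally slicker because the identity $\sum_j\varphi_j=1$ comes for free and no lower bound on $S$ is needed; your approach is the one more often seen in differential-geometry textbooks and has the mild advantage that it generalizes painlessly to coverings that are not tilings. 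Either way the outcome, and the constants, are the same up to the combinatorial bookkeeping you correctly flag for the value $21$.
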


For the proof, take a grid of squares of side length $\delta/2$ and
regularize the characteristic function of each square with an
appropriate approximation of the identity (see \cite[p.440-441]{V},
\cite{G} or \cite{Vi}.

Given a compactly supported continuous function $f$ on the plane,
set
\begin{equation}\label{efejota}
f_j = \frac{1}{\pi z}* \varphi_j \,\dbar f,
\end{equation}
which makes sense, because it is the convolution of the compactly
supported distribution $\varphi_j \,\dbar f$ with the locally
integrable function $\frac{1}{\pi z}.$ Since $\frac{1}{\pi z}$ is
the fundamental solution of the differential operator $\dbar$, we
have $\dbar f_j = \varphi_j \,\dbar f$ and thus $f_j$ is holomorphic
where $f$ is and off a compact subset of $\Delta_j.$ It is easy to
see that
$$
f_j(z)=\frac{1}{\pi} \int \frac{f(w)-f(z)}{w-z}\, \dbar \varphi_j(w)
\, dA(w)
$$
and hence
$$
|f_j(z)| \le C\, \omega(f,\delta), \quad z \in \C,
$$
where $C$ is an absolute constant and $\omega(f,\delta)$ is the
modulus of continuity of $f.$ Since the family of functions
$(\varphi_j)$ is a partition of the unity,
\begin{equation}\label{separation}
f= \sum_{j} f_j.
\end{equation}

If one defines a singularity of $f$ as a point in the support of
$\dbar f,$ then clearly the effect of \eqref{separation} is to
distribute the singularities of $f$ among the discs $\Delta_j.$
Notice that the sum in \eqref{separation} contains only finitely
many non-zero terms, because the support of $\dbar f$ is compact.

Having set up these preliminaries, let us start the proof of the
Theorem. We will consider only the case in which $D$ is not empty;
otherwise the conclusion is straightforward. Extend the function $f$
to a compactly supported continuous function on the plane, fix a
$\delta
> 0$ and apply \eqref{separation}. We divide the indexes $j$ into
the following three classes :
\begin{align*}
I &= \{j : \Delta_j \subset D \},\\*[3pt]
II &= \{j : \Delta_j \cap \gamma \neq \emptyset \},
\intertext{and}
III &= \{j : \Delta_j \subset D_0 \}.
\end{align*}

For $j \in III$ we have $\int_\gamma f_j(z)\,dz =0,$ because $f_j$
is holomorphic on an open set in which $\gamma$ is homologous to
zero. Hence
\begin{equation*}\label{sumaIII}
\sum_{j \in III} \int_\gamma f_j(z)\,dz =0.
\end{equation*}

For $j \in I$ use the definition of $f_j$ in \eqref{efejota} and
Fubini's theorem to get
\begin{equation*}\label{grupI}
\int_\gamma f_j(z)\,dz = 2 i \int_{\C} \varphi_j(w)\, \dbar
f(w)\In(\gamma,w)\,d A(w).
\end{equation*}

Adding up in $j \in I$ one obtains
\begin{equation*}\label{sumaI}
\sum_{j \in I} \int_\gamma f_j(z)\,dz = 2 i \int_{\C} \left(\sum_{j
\in I} \varphi_j(w)\right)\, \dbar f(w)\In(\gamma,w)\,d A(w).
\end{equation*}
Since
$$
\lim_{\delta \rightarrow 0} \sum_{j \in I} \varphi_j(w) = 1, \quad w
\in D
$$
and $\dbar f(w)\In(\gamma,w) \in L^1(D),$ it follows, by dominated
convergence, that
\begin{equation*}\label{limsumaI}
\lim_{\delta \rightarrow 0}\sum_{j \in I} \int_\gamma f_j(z)\,dz = 2
i \int_{\C} \dbar f(w)\In(\gamma,w)\,d A(w).
\end{equation*}

Therefore
$$
\int_\gamma f(z) \,dz = 2 i \int_{\C} \dbar f(z)\In(\gamma,z)\,d
A(z) + \lim_{\delta \rightarrow 0}\sum_{j \in II} \int_\gamma
f_j(z)\,dz
$$
and so to complete the proof it is enough to check that the limit in
the above right hand side vanishes. This follows from the inequality
\begin{equation}\label{sumaII}
\sum_{j \in II} \left|\int_\gamma f_j(z)\,dz \right| \le
C\,\omega(f,\delta)\, l(\gamma) + \eta(\delta),
\end{equation}
where $C$ is an absolute constant and $\eta(\delta)$ a function
which tends to zero with $\delta$. To show \eqref{sumaII} fix $j \in
II.$ One has
\begin{equation}\label{intj}
\begin{split}
\left|\int_\gamma f_j(z)\,dz \right| &= \left|\int_{\gamma \cap
\overline{\Delta}_j} f_j(z)\,dz \right|+ \left|\int_{\gamma \cap
(\overline{\Delta}_j)^c} f_j(z)\,dz \right|
\\*[5pt] & \le C\,\omega(f,\delta)\,l(\gamma \cap
\overline{\Delta}_j) + \left|\int_{\gamma \cap
(\overline{\Delta}_j)^c} f_j(z)\,dz \right|.
\end{split}
\end{equation}
Observe that adding up on $j \in II$ the first terms in right hand
side of the inequality above one gets the desired estimate, namely,
$$
\sum_{j \in II} \omega(f,\delta)\,l(\gamma \cap \overline{\Delta}_j)
\le C \, \omega(f,\delta)\, l(\gamma),
$$
where we used that the family of discs $\Delta_j$ is almost
disjoint. However, the obvious estimate for the second term
$$
\left|\int_{\gamma \cap (\overline{\Delta}_j)^c} f_j(z)\,dz \right|
\le C \,\omega(f,\delta) \, l(\gamma \cap (\overline{\Delta}_j)^c)
$$
does not lead anywhere because the length of $\gamma$ off the disc
$\overline{\Delta}_j$ is not under control. To overcome this
difficulty we resort to the next lemma.

\begin{ML}\label{ML}
Let $\Gamma$ be a closed rectifiable Jordan curve, $\Delta$ a disc
of radius $\delta$ and $h$ a bounded continuous function on $\C,$
holomorphic off a compact subset of $\Delta.$ Then
\begin{equation}\label{foradisc}
\left|\int_{\Gamma \cap (\overline{\Delta})^c}  h(z)\,dz \right| \le
2 \pi \, \|h\|_\infty \,\delta,
\end{equation}
where $\|h\|_\infty $ is the supremum norm of $h$ on the whole
plane.
\end{ML}

We postpone the proof of the Main Lemma to next section and we
proceed to finish the proof of the Theorem.

First of all we prove \eqref{sumaII} with $\eta \equiv 0$ assuming
that $\gamma= \Gamma$ is a Jordan curve.

Consider the case $\delta \le \frac{1}{2}
\operatorname{diam}(\Gamma).$ Since there is a point in $\Gamma \cap
\partial \Delta_j$ the length of the curve in the disc $3 \Delta_j$
is larger than $2 \delta. $ Combining \eqref{intj} with the Main
Lemma applied to $f_j$ and $\Delta_j$ we conclude that
$$
\left|\int_\Gamma f_j(z)\,dz \right| \le C \,\omega(f,\delta)\,
l(\Gamma \cap (3 \Delta_j),
$$
which yields \eqref{sumaII} with $\eta \equiv 0,$ because the family
of discs $(3 \Delta_j)$ is almost disjoint.

If $\delta
> \frac{1}{2} \operatorname{diam}(\Gamma),$ then the number of discs
$\Delta_j$ that intersect $\Gamma$ is less than an absolute
constant. Thus \eqref{sumaII} with $\eta \equiv 0$ and $\gamma$
replaced by $\Gamma$ also holds in this case.

 Now we will reduce the proof of
\eqref{sumaII} to the case of a Jordan curve by appealing to the
decomposition theorem of \cite{CC2} mentioned in the introduction.
There is a sequence (maybe finite) of rectifiable Jordan curves
$(\gamma_n)_{n=1}^\infty$ such that $\gamma_n \subset \gamma,$
$dz_{\gamma} = \sum_{n=1}^\infty dz_{\gamma_n}$ and
$\sum_{n=1}^\infty l(\gamma_n) \leq l(\gamma).$  We need to
decompose $II$ into two subsets. Let $II_0$ be the set of indices $j
\in II$ such that $D_j$ does not intersect any $\gamma_n$ and $II_1
= II \setminus II_0. $  Since the set of indices $II$ is finite, we
have
\begin{equation*}\label{sumaII2}
\begin{split}
\sum_{j \in II_1} \left|\int_\gamma f_j(z)\,dz \right|& \le \sum_{j
\in II_1} \sum_{n=1}^\infty \left|\int_{\gamma_n} f_j(z)\,dz \right|
\\*[5pt] & =  \sum_{n=1}^\infty
\sum_{j \in II_1} \left|\int_{\gamma_n} f_j(z)\,dz \right|.
\end{split}
\end{equation*}
Given $n=1,2,...$ set $II_n = \{j : \Delta_j \cap \gamma_n \neq
\emptyset \}.$  Each $j \in II_1$ belongs to at least one $II_n$,
but may belong to several. Taking into account this remark in the
first inequality below and applying \eqref{sumaII} with $\eta\equiv
0$ to the Jordan curve $\gamma_n$ and the function $f$ in the second
one, we get
\begin{equation*}\label{sumaIIfinal}
\begin{split}
\sum_{n=1}^\infty \sum_{j \in II_1} \left|\int_{\gamma_n} f_j(z)\,dz
\right| & \le \sum_{n=1}^\infty \sum_{j \in II_n}
\left|\int_{\gamma_n} f_j(z)\,dz \right|
\\*[5pt] & \le  \sum_{n=1}^\infty C \,\omega(f,\delta)\,l(\gamma_n)
\\*[5pt] & \le  C \,\omega(f,\delta)\,l(\gamma),
\end{split}
\end{equation*}
which is the right estimate.

We turn now our attention to the sum over $II_0.$  If $j \in II_0$
then $\In(\gamma,z)= \sum_{n=1}^\infty \In(\gamma_n,z)$ is constant
on $\Delta_j \setminus \gamma.$ Hence $\Delta_j \setminus \gamma
\subset D_0$ or $\Delta_j \setminus \gamma \subset D.$

In the first case we argue as we did for $j \in III.$ The infinite
cycle $\sum_{n=1}^\infty \gamma_n$ is homologous to $0$ in an open
set on which $f_j$ is holomorphic and therefore $\int_\gamma
f_j(z)\,dz = \sum_{n=1}^\infty \int_{\gamma_n} f_j(z)\,dz =0.$ This
follows by the usual argument to prove Cauchy's Theorem for finite
cycles.

To settle the case $\Delta_j \setminus \gamma \subset D$ we need to
know that the $\dbar$-derivative of $f$ on $\Delta_j$ does not
charge the set $\gamma \cap \Delta_j.$

\begin{lemma}\label{lemagamma}
Let $\Delta$ be an open disc and assume that $\Delta \setminus
\gamma \subset D.$ Then the $\dbar$-derivative of $f$ on $\Delta$ in
the distributions sense is the function \, $\dbar f(z)
\,\chi_{\Delta \setminus \gamma}(z).$
\end{lemma}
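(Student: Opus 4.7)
Let $S$ denote the distributional $\dbar$-derivative of $f$ on $\Delta$, well-defined because $\Delta\subset D\cup\gamma$ makes $f$ continuous on $\Delta$, and set $g:=\dbar f\cdot\chi_{\Delta\setminus\gamma}\in L^2(\Delta)$. On the open set $\Delta\setminus\gamma\subset D$ the hypothesis gives $S=g$ as distributions, so $T:=S-g$ is a distribution on $\Delta$ supported on $\gamma\cap\Delta$; my goal is to show that $\langle T,\psi\rangle=0$ for every $\psi\in C_c^\infty(\Delta)$.

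Fix such a $\psi$ and set $K:=\gamma\cap\operatorname{supp}\psi$. For each $r>0$, the arc-length parametrisation of $\gamma$ combined with a greedy maximal separation of centres furnishes a cover of $K$ by $N_r\le Cl(\gamma)/r$ discs $B(z_k,r)$ of bounded overlap. From this I build a cutoff $\chi_r\in C_c^\infty(\C)$ with $\chi_r\equiv 1$ on a neighbourhood of $K$, $\operatorname{supp}\chi_r\subset\bigcup_k B(z_k,2r)$, area of support $O(l(\gamma)r)\to 0$, and $\int|\nabla\chi_r|\,dA\le Cl(\gamma)$ merely uniformly bounded. Splitting $\psi=\psi\chi_r+\psi(1-\chi_r)$, the piece $\psi(1-\chi_r)$ belongs to $C_c^\infty(D)$, so the hypothesis on $D$ and dominated convergence give
$$
\int_\Delta f\,\dbar(\psi(1-\chi_r))\,dA\longrightarrow -\int_{\Delta\setminus\gamma}\dbar f\cdot\psi\,dA,
$$
and $\int f\chi_r\,\dbar\psi\,dA\to 0$ since $|\operatorname{supp}\chi_r|\to 0$.

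The crux is the term $\int f\psi\,\dbar\chi_r\,dA$. Taking a partition of unity $\{\eta_k\}$ subordinate to $\{B(z_k,3r)\}$ with $\sum_k\eta_k=1$ on $\operatorname{supp}\chi_r$, I write
$$
\int f\psi\,\dbar\chi_r\,dA=\sum_k\int f\psi\,\dbar(\eta_k\chi_r)\,dA.
$$
Each $\eta_k\chi_r$ is compactly supported in the small disc $B(z_k,3r)$, so $\int\dbar(\eta_k\chi_r)\,dA=0$ and $f\psi$ may be replaced by $f\psi-f(z_k)\psi(z_k)$, whose absolute value on $B(z_k,3r)$ does not exceed $\omega(f\psi,6r)$. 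This yields
$$
\Bigl|\int f\psi\,\dbar\chi_r\,dA\Bigr|\le \omega(f\psi,6r)\sum_k\int|\dbar(\eta_k\chi_r)|\,dA\le C\,\omega(f\psi,6r)\,l(\gamma)\longrightarrow 0
$$
by uniform continuity of $f\psi$. The three limits together give $\langle T,\psi\rangle=0$.

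The main obstacle is that rectifiability of $\gamma$ only furnishes a uniformly bounded $\|\nabla\chi_r\|_{L^1}$, never a vanishing one; the naive estimate on $\int f\psi\,\dbar\chi_r\,dA$ is thus bounded but not small. The device that rescues the proof is the partition-of-unity localisation to discs of radius $3r$, which lets the modulus of continuity of the continuous function $f\psi$ absorb the bounded gradient mass and produce a vanishing contribution, thereby excluding any singular charge of the distributional $\dbar f$ on $\gamma\cap\Delta$.
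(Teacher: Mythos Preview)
Your argument is correct and rests on the same key estimate as the paper's proof: rectifiability of $\gamma$ lets one cover $\gamma\cap\operatorname{supp}\psi$ at scale $r$ by $O(l(\gamma)/r)$ discs, so the total gradient mass of the cutoff stays bounded by $Cl(\gamma)$, and the modulus of continuity of $f\psi$ then forces the contribution near $\gamma$ to vanish as $r\to0$.

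The paper packages this differently. Rather than testing directly against $\psi\in C_c^\infty(\Delta)$, it first reduces (via the Appendix lemma in Section~4) to proving the Green identity
\[
\int_{\partial Q}f\,dz=2i\int_Q\dbar f\,\chi_{\Delta\setminus\gamma}\,dA
\]
for every axis-parallel closed square $Q\subset\Delta$, and then establishes that identity by dyadic subdivision of $Q$: sub-squares disjoint from $\gamma$ lie in $D$ and are handled by the hypothesis there, while for sub-squares meeting $\gamma$ one subtracts the value of $f$ at the center and bounds the sum of their perimeters by $Cl(\gamma)$, exactly parallel to your $\sum_k\int|\dbar(\eta_k\chi_r)|\,dA\le Cl(\gamma)$. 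Your direct distributional approach is a bit more streamlined in that it avoids the detour through squares and the auxiliary Appendix lemma; the paper's route is arguably more elementary in that it never constructs a smooth cutoff and works only with line integrals over square boundaries.
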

\begin{proof}
It is shown in Appendix 1(section 4) that the conclusion of the
Lemma follows from the identity
\begin{equation}\label{Q}
\int_{\partial Q} f(z)\,dz = 2 i \int_Q \dbar f(z)\, \chi_{\Delta
\setminus \gamma}(z) \,dA(z),
\end{equation}
for each closed square $Q$ with sides parallel to the coordinates
axis contained in $\Delta .$ To prove \eqref{Q} subdivide $Q$ in
dyadic sub-squares. At the $n$-th generation one has $4^n$ dyadic
sub-squares of $Q$, denoted by $Q_k, \; 1 \le k \le 4^n.$  Their
side length is $L \,4^{-n},$ $L$ being the side length of $Q.$ Set
$I=\{k : Q_k \cap \gamma = \emptyset \}$ and $J=\{k : Q_k \cap
\gamma \neq \emptyset \}.$ Then
\begin{equation*}\label{integraldQ}
\begin{split}
\int_{\partial Q} f(z)\,dz & = \sum_{k \in I} \int_{\partial Q_k}
f(z)\,dz + \sum_{k \in J} \int_{\partial Q_k} f(z)\,dz
\\*[5pt] & =  \sum_{k \in I} 2 i \int_{Q_k} \dbar
f(z)\,dA(z) + \sum_{k \in J} \int_{\partial Q_k} \left(f(z)-f(z_k)
\right) \,dz
\\*[5pt] & \equiv T_1(n)+T_2(n),
\end{split}
\end{equation*}
where $z_k$ is any point in $Q_k$ and the last identity is a
definition of $T_1(n)$ and $T_2(n).$ On the one hand it is clear
that
$$
\lim_{n \rightarrow \infty} T_1(n) = 2i \int_Q \dbar f(z)\,
\chi_{\Delta \setminus \gamma}(z) \,dA(z)
$$
and on the other hand, setting $\epsilon_n = \sqrt{2} L 4^{-n},$ we
have
$$
|T_2(n)| \le \omega(f,\epsilon_n) \sum_{k \in J} l(\partial Q_k).
$$
Take $n$ big enough so that $\operatorname{diam}(2 Q_k) = \sqrt{2}\,
2 L 4^{-n} < \operatorname{diam}(\gamma).$ Then $ l(\partial Q_k)
\le 8 \, l(2 Q_k \cap \gamma)$ and thus
$$
|T_2(n)| \le C\, \omega(f,\epsilon_n) l(\gamma),
$$
because the family of squares $2 Q_k, 1 \le k \le 4^n,$ is almost
disjoint (with an absolute constant). Letting $n \rightarrow \infty
$ we get \eqref{Q}.
\end{proof}

Denote by $II_2$ the set of indices $j \in II_0$ such that $\Delta_j
\setminus \gamma \subset D.$ For $j \in II_2,$ by Fubini's theorem
and Lemma 2,
\begin{equation*}\label{integralII2}
\begin{split}
\int_\gamma f_j(z)\,dz & = 2 i \int_{\C} \varphi_j(w)\, \dbar
f(w)\,\chi_{\Delta_j \setminus \gamma}(w)\,\In(\gamma,w)\,d A(w)
\\*[5pt] & = 2 i \int_{D} \varphi_j(w)\, \dbar f(w)\In(\gamma,w)\,d
A(w),
\end{split}
\end{equation*}
which is the same relation we found for indices $j\in I.$ It is also
clear that for some Borel subset $E$ of $\gamma,$ $\lim_{\delta
\rightarrow 0}\sum_{j \in II_2} \varphi_j (z) = \chi_E(z), \,z \in
\C.$ Therefore
$$
\sum_{j \in II_0} \left|\int_\gamma f_j(z)\,dz \right| \le 2
\int_{D} \left( \sum_{j \in II_2} \varphi_j(z)\right)\, |\dbar
f(z)\In(\gamma,z)|\,d A(z)\equiv \eta(\delta)
$$
and, since $E \subset \gamma$, $\eta(\delta)$ tends to zero with
$\delta.$ This completes the proof of \eqref{sumaII} and of the
Theorem.

\section{Proof of the Main Lemma}
We  can assume , without loss of generality, that $\Gamma$
intersects the circle $\partial \Delta$ in finitely many points.
Indeed, by the Banach Indicatrix Theorem \cite[p. 225]{Na} applied
to the function of bounded variation $\left|\gamma \right|$, there
is a sequence of numbers $\lambda_n > 1$ with limit $1$ such that
$\Gamma$ intersects $\partial (\lambda_n \Delta)$ in finitely many
points. It is readily shown that the inequality \eqref{foradisc}
follows as soon as one knows it for the discs $\lambda_n \Delta.$
Assume then that $\Gamma$ intersects the circle $\partial \Delta$ in
finitely many points.

We claim that there are finitely many subintervals $(I_k)$ of the
circle $\partial \Delta,$ which are mutually disjoint, such that
\begin{equation}\label{intervals}
\int_{\Gamma \cap (\overline{\Delta})^c}  h(z)\,dz = \sum_k
\epsilon_k \int_{I_k}  h(z)\,dz,
\end{equation}
where $\epsilon_k = \pm 1$ determines the orientation on the
interval $I_k$ ($\epsilon_k =1$ corresponds to the counterclockwise
orientation of $\partial \Delta$). It is plain that
\eqref{intervals} completes the proof of the Main Lemma.

Take a connected component $C$ of $\Gamma \cap
(\overline{\Delta})^c.$ The open Jordan arc $C$ has two end points
on $\partial\Delta$ ,which determine two complementary open
intervals $I_1$ and $I_2$ in $\partial\Delta.$  Each $I_j, j=1,2,$
determines a closed Jordan curve $C_j$ which is the union of $C$ and
the closure of $I_j.$  We claim that one and only one of the two
closed curves $C_j$ has index zero with respect to the center $z$ of
$\Delta.$ This means that the domain enclosed by this Jordan curve
lies completely outside the closed disc $\overline{\Delta}.$ To show
the claim take a path joining $z$ with $\infty$ without touching the
closure of $C.$ Consider the point $w$ where this path leaves the
disc $\overline{\Delta}$ for the last time. Modifying the path by
taking first the segment joining $z$ with $w$, we may assume that it
indeed intersects $\partial \Delta$ at only one point $w$, which
belongs to $I_1$ or $I_2.$ If it lies on $I_2$ then the index of
$C_1$ with respect to $z$ is $0$ and the index of $C_2$ with respect
$z$ is $\pm 1.$ The case $w \in I_1$ is symmetric and so the claim
is proven.

Given a connected component $C$ of $\Gamma \cap
(\overline{\Delta})^c,$ we define $I(C)$ to be the interval $I_j$ in
the preceding discussion such that the domain enclosed by $C_j$ lies
in the complement of $\overline{\Delta}.$

\begin{center}
\includegraphics{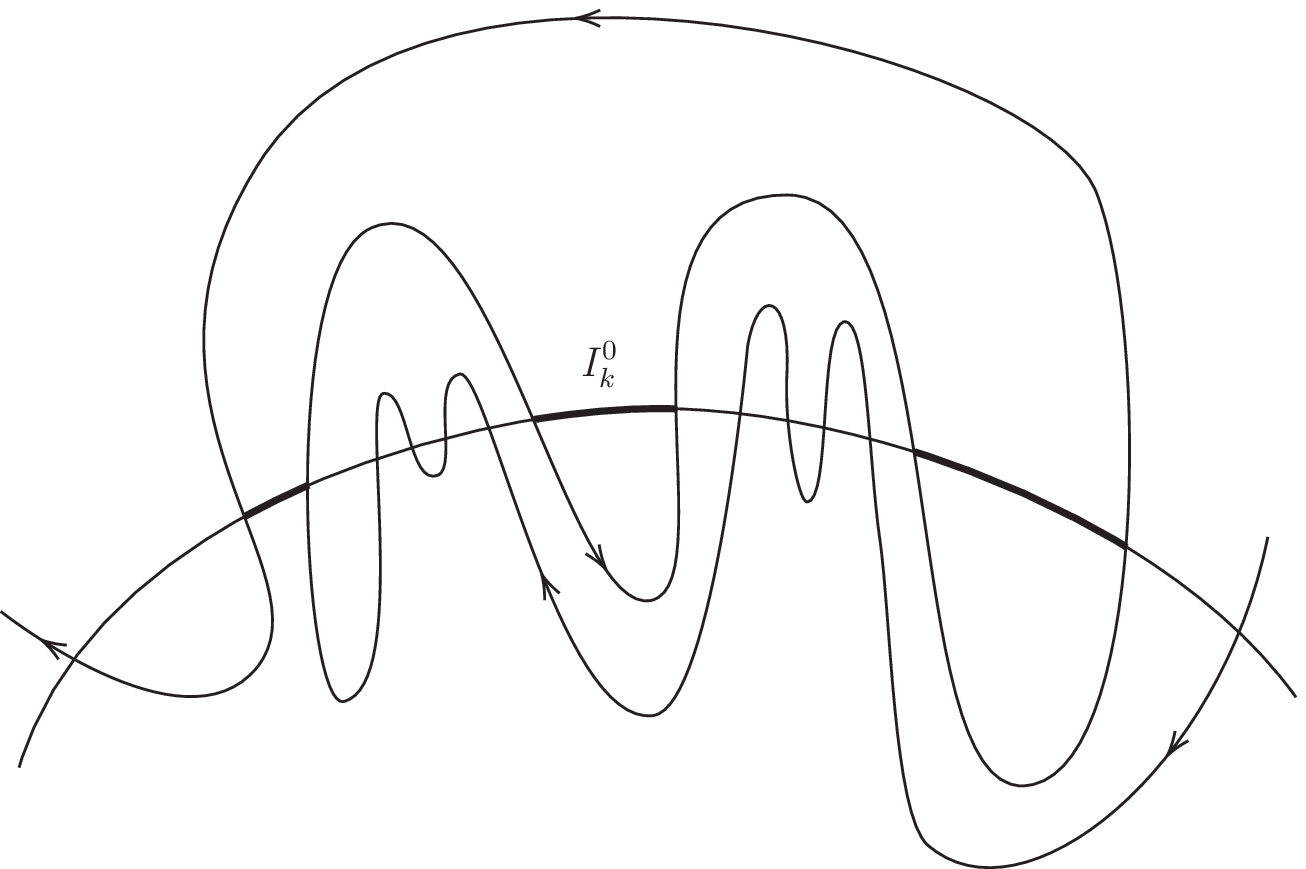}
\end{center}
\begin{center}
Figure 1
\end{center}

If $C_1$ and $C_2$ are two different components of $\Gamma \cap
(\overline{\Delta})^c$ then $I(C_1) \subset I(C_2) $ or $I(C_2)
\subset I(C_1)$ or the intervals $I(C_1)$ and $I(C_2)$ have disjoint
interiors. This ``dyadic" structure allows a classification of the
intervals $I(C)$ in generations. Since there are finitely many
components of $\Gamma \cap (\overline{\Delta})^c$ some of the
intervals $I(C)$ are maximal with respect to inclusion. These are
said to be of generation  $0$ and they form a set $G_0.$ If an
interval $I(C)$ is contained in exactly one interval of $G_0$ it is
said to be of generation $1.$ The intervals $I(C)$ of generation $1$
form a set $G_1.$ In this way we define inductively intervals $I(C)$
of generation $p$ and the corresponding set $G_p$ of intervals of
generation $p$. Clearly there are only finitely many generations
(see Figure 1).

To construct the intervals $(I_k)$ in \eqref{intervals} we proceed
inductively as follows. Notice that the subset of $\partial \Delta$
\begin{equation*}\label{intervals0}
\cup_{I \in G_0} \left( I \setminus \cup_{I\supset J \in G_1} J
\right)
\end{equation*}
is a union of disjoint intervals. Call this intervals $(I_k^0).$ We
then have, for appropriately chosen $\epsilon_k = \pm 1,$
\begin{equation}\label{integral0}
\sum_{I(C) \in G_0} \int_C h(z) \,dz + \sum_{I(C) \in G_1} \int_C
h(z) \,dz = \sum_{k} \epsilon_k \int_{I_k^0} h(z) \,dz.
\end{equation}
This is a consequence of Cauchy's integral theorem applied to the
function $h$ and the Jordan curves defined as follows.  For each
$I(C_0) \in G_0$ define the Jordan curve which consists of the arc
$C_0,$ the arcs $C$ such that $I(C) \in G_1$ and $I(C)\subset
I(C_0)$ and the intervals $I_k^0 \subset I(C_0).$ The subarcs of
$\Gamma$ keep the orientation of $\Gamma$ and the orientation on the
$I_k^0$ can be chosen so that \eqref{integral0} holds because of a
 topological fact that we discuss below.

 We say that $\Gamma$ enters the disc $\Delta$ at the point $\Gamma(t_0)\in
 \partial\Delta$ if there is $\epsilon>0$ with the property that $\Gamma(t) \in
 (\overline{\Delta})^c$ for $t_0-\epsilon < t <t_0$ and $\Gamma(t) \in
 \Delta$ for $t_0< t <t_0 + \epsilon.$  We say that  $\Gamma$ leaves the disc $\Delta$ at the point $\Gamma(t_0)\in
 \partial\Delta$ if there is $\epsilon>0$ with the property that $\Gamma(t) \in
 \Delta$ for $t_0-\epsilon < t <t_0$ and $\Gamma(t) \in
 (\overline{\Delta})^c$ for $t_0< t <t_0 + \epsilon.$  There is a third category of
 points in $\Gamma \cap \partial\Delta,$ namely those with
 the  property that the curve just before and just after the point
 stays either in the disc or in the complement of its closure. We
 will ignore these points. Consider now two points in  $\Gamma \cap
 \partial\Delta$ at which $\Gamma$ enters or leaves the disc and assume
 that in one of the complementary intervals in $\partial\Delta$
 determined by these two points there is no other point at which  $\Gamma$ enters or leaves the
 disc. Then we claim that at one of the two points the curve enters the disc and at the other
 the curve leaves the disc. In other words, it is not possible that either the curve enters the disc at both points or
 that the curve leaves the disc at both points. Before embarking in
 the proof of this claim we remark that, with \eqref{integral0} at
 our disposition, and arguing inductively with the intervals of
 generation 2 and subsequent (if any),  we finally get \eqref{intervals}.

To prove the claim, take two points $A$ and $B$ in $\Gamma \cap
\partial\Delta$ at which $\Gamma$ enters or leaves the disc and
assume
 that in one of the complementary open
 intervals in $\partial\Delta$
 determined by $A$ and $B$ there is no other point at which  $\Gamma$ enters or leaves the
 disc. Then we have to show that the curve enters the disc at one of
 the points $A$ and $B$
 and leaves the disc at the other.

Proceeding by contradiction, we assume that at $A$ and $B$ the curve
leaves the disc (the argument is similar for the case in which the
curve enters the disc at $A$ and $B$ ). Assume also that in the
interval on the circle $\partial \Delta$ which joins $A$ to $B$ in
the clockwise direction there are no points at which the curve
enters or leaves the disc (see Figure 2).

\begin{center}
\includegraphics{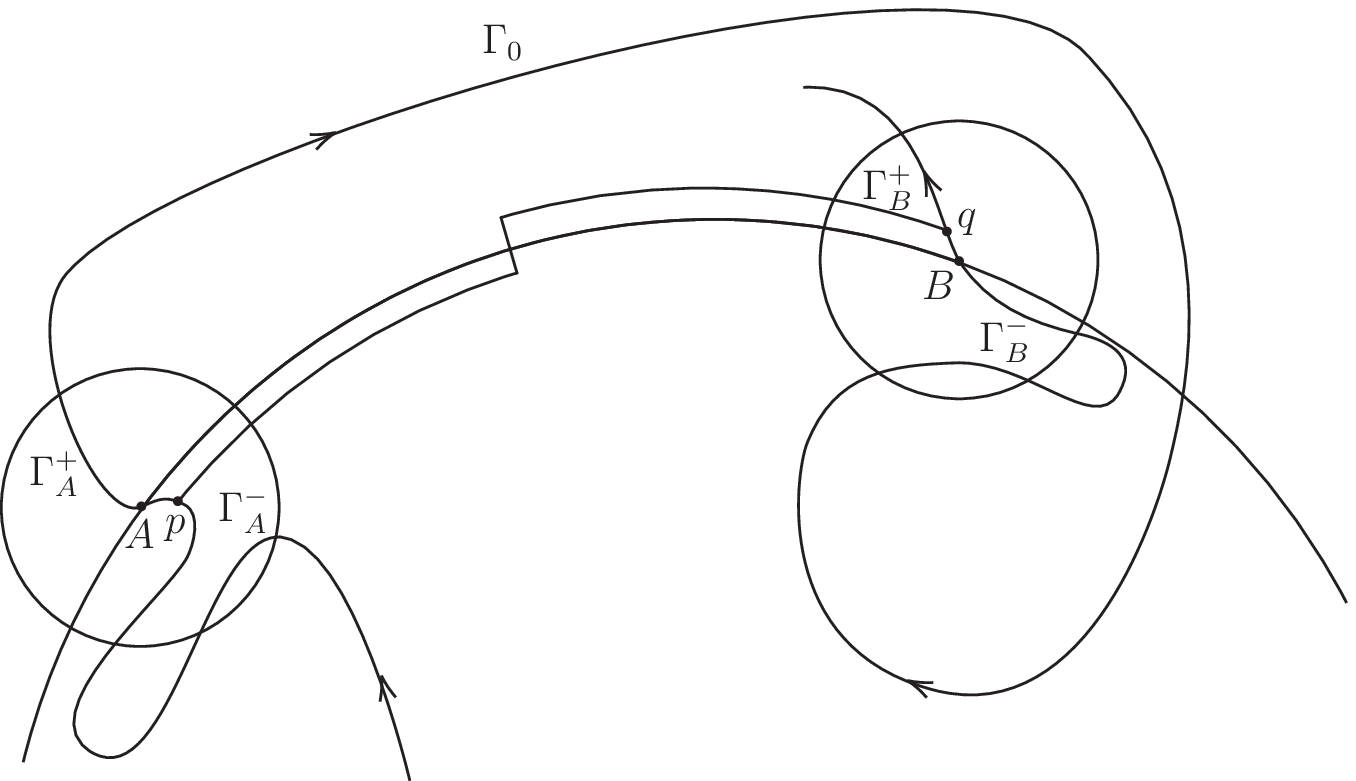}
\end{center}
\begin{center}
Figure 2
\end{center}

\noindent
In view of the definition
that the curve leaves the disc at $A$, there is a disc $D_A$
centered at $A$ and of radius small enough so that $\Gamma \cap D_A$
contains a Jordan arc $\Gamma_A^{-}$ joining $\partial D_A$ to $A$
inside $\Delta$ (that is, $\Gamma_A^{-} \setminus \{A\} \subset
\Delta$) and a Jordan arc $\Gamma_A^{+}$ joining $A$ to $\partial
D_A$ inside $(\overline{\Delta})^c$ (that is, $\Gamma_A^{+}
\setminus \{A\} \subset (\overline{\Delta})^c.$) The same argument
applies to $B$ to produce a disc $D_B$ centered at $B$ and Jordan
arcs $\Gamma_B^{-}$ and $\Gamma_B^{+}$ joining respectively
$\partial D_B$ to $B$ inside $\Delta$ and $B$ to $\partial D_B$
inside $(\overline{\Delta})^c.$ Let $\Gamma_0$ be the Jordan arc
contained in $\Gamma$ joining the end point of $\Gamma_A^{+}$ to the
initial point of $\Gamma_B^{-}.$ Let $\Gamma_{AB}$ be the closed
Jordan curve formed by the union of the $4$ arcs $\Gamma_A^{+},
\Gamma_0 , \Gamma_B^{-}$ and $\widehat{AB}$, where $\widehat{AB}$ is
the interval joining $B$ to $A$ in the circle $\partial\Delta$ with
the counterclockwise orientation. The sub-arcs of $\Gamma$ keep the
orientation provided by the original parametrization of~$\Gamma.$

Out goal is to find points $p \in \Gamma_A^{-}$ and $q \in
\Gamma_B^{+}$ with different index with respect to the Jordan curve
$\Gamma_{AB}.$ This will provide a contradiction, because the
sub-arc of $\Gamma$ starting at $B$ and ending at  $A$ joins $p$ and
$q$ without intersecting $\Gamma_{AB},$ which means that $p$ and $q$
have the same index with respect to $\Gamma_{AB}.$

Let us proceed to the definition of $p$ and $q.$ Since $\Gamma_0$
and $\widehat{AB}$ are disjoint compact sets there is $\delta > 0$
such that $U_\delta \cap \Gamma_0 = \emptyset$, where $U_\delta =
\{z : \operatorname{dist}(z, \widehat{AB}) < \delta \}.$ Take $p \in
U_\delta \cap \Gamma_A^{-}$ and $q \in U_\delta \cap \Gamma_B^{+}.$
The next step is to construct a Jordan arc joining $p$ and $q$,
which intersects $\Gamma_{AB}$ only once, so that $p$ and $q$ have
different index with respect to $\Gamma_{AB}.$ Start at $p$ and
follow the circle concentric with $\partial\Delta$ which contains
$p$ in the clockwise direction until we are under the middle point
of the interval $\widehat{AB}.$ Continue along the ray emanating at
the center of $\Delta$ towards $\partial\Delta,$ cross
$\partial\Delta$ and proceed until you touch the circle concentric
with $\partial\Delta$ containing $q.$ Then follow that circle until
you get to $q.$ Obviously you cross $\Gamma_{AB}$ once through
$\widehat{AB}$, but there is no other intersection with
$\Gamma_{AB}.$ The proof is now complete.

\section{Appendix }
In this section we prove the following.
\begin{lemma}\label{Fesq1}
Assume that $f$ is a continuous function on an open set $\Omega$
such that its partial derivatives exist almost everywhere in
$\Omega$ and $\overline{\partial} f \in L^1_{loc}(\Omega).$ Assume
further that
\begin{equation}\label{Fesq2}
\int_{\partial Q} f(z)\,dz = 2 i \int_Q \dbar f(z)\,dA(z),
\end{equation}
for each closed square $Q \subset \Omega$ with sides parallel to the
coordinates axis. Then the pointwise $\dbar$ derivative of $f$ on
$\Omega$ is indeed the distributional derivative of $f$ on $\Omega.$
\end{lemma}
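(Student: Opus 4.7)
The plan is to deduce the lemma from an extension of the hypothesised Green formula to products $f\psi$ with $\psi\in C^\infty$. Specifically, I aim to show that for every $\psi\in C^\infty(\Omega)$ and every closed square $Q\subset\Omega$ with sides parallel to the axes,
\begin{equation*}
\int_{\partial Q} f\psi\,dz \;=\; 2i\int_Q \bigl(\psi\,\dbar f + f\,\dbar\psi\bigr)\,dA, \qquad (\star)
\end{equation*}
the integrand being the a.e.-defined pointwise expression. Granting $(\star)$, the lemma is immediate: for $\varphi\in C_c^\infty(\Omega)$ choose a closed square $Q\subset\Omega$ with $\operatorname{supp}\varphi\subset Q^\circ$; then the left-hand side of $(\star)$ vanishes because $\varphi=0$ on $\partial Q$, and rearranging gives $-\int f\,\dbar\varphi\,dA = \int\varphi\,\dbar f\,dA$, which is exactly the desired distributional identity.

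To prove $(\star)$ I would subdivide $Q$ into $N^2$ closed sub-squares $Q_\alpha$ of side $L/N$ centred at $w_\alpha$, so that $\int_{\partial Q}f\psi\,dz = \sum_\alpha\int_{\partial Q_\alpha}f\psi\,dz$ by cancellation of interior edges. On each $Q_\alpha$ I Taylor-expand
\begin{equation*}
\psi(z) = \psi(w_\alpha) + \partial\psi(w_\alpha)(z-w_\alpha) + \dbar\psi(w_\alpha)(\bar z-\bar w_\alpha) + R_\alpha(z), \quad |R_\alpha|\le C(L/N)^2,
\end{equation*}
and treat the four contributions separately. The constant term, after the hypothesis is applied to each $Q_\alpha\subset\Omega$, gives $2i\sum_\alpha\psi(w_\alpha)\int_{Q_\alpha}\dbar f\,dA$, which tends to $2i\int_Q\psi\,\dbar f\,dA$ by uniform continuity of $\psi$ and $\dbar f\in L^1(Q)$. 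The anti-linear term, using the classical identity $\int_{\partial Q_\alpha}(\bar z-\bar w_\alpha)\,dz = 2i(L/N)^2$ (Green for smooth functions), produces a Riemann sum $2i\sum_\alpha f(w_\alpha)\dbar\psi(w_\alpha)(L/N)^2$ for $2i\int_Q f\,\dbar\psi\,dA$. The holomorphic-linear term is an error: $\int_{\partial Q_\alpha}(z-w_\alpha)\,dz = 0$ by symmetry, so only $\int_{\partial Q_\alpha}(f-f(w_\alpha))(z-w_\alpha)\,dz$ survives, bounded by $C\,\omega(f,L/N)(L/N)^2$ per square and totalling $O(L^2\omega(f,L/N))\to 0$. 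The remainder contributes $O(\|f\|_\infty (L/N)^3)$ per square, summing to $O(L^3/N)\to 0$.

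The only delicate point is the bookkeeping of these error terms, which uses three ingredients already built into the hypotheses: continuity of $f$ (through $\omega(f,L/N)\to 0$), local $L^1$-integrability of $\dbar f$ (needed to pass to the limit in the Riemann-type sum weighted by $\int_{Q_\alpha}\dbar f\,dA$), and the classical Green identities $\int_{\partial Q_\alpha}(z-w_\alpha)\,dz = 0$ and $\int_{\partial Q_\alpha}(\bar z-\bar w_\alpha)\,dz = 2i(L/N)^2$ for smooth integrands. Once each of the four pieces is controlled, letting $N\to\infty$ yields $(\star)$ and hence the lemma. A natural first attempt through Morera plus a Cauchy potential $g = -\frac{1}{\pi}\int_Q \dbar f(w)/(w-z)\,dA(w)$ runs aground here because for $\dbar f$ merely in $L^1_{\mathrm{loc}}$ the potential $g$ need not be continuous, and so the subdivision approach above is the cleaner route.
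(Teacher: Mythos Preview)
Your argument is correct and in fact more complete than the paper's. The paper argues by mollification: setting $f_\epsilon = f*\rho_\epsilon$, it reduces the lemma to the identity $(f*\dbar\rho_\epsilon)(z) = (\dbar f*\rho_\epsilon)(z)$ on $\Omega_\delta$, which it obtains by ``applying \eqref{Fesq2} to $Q$ and $g$'' with $g(w) = f(w)\rho_\epsilon(z-w)$ on the square of side $2\epsilon$ centred at $z$. But the hypothesised Green formula \eqref{Fesq2} is stated only for $f$, not for a product $g = f\psi$; invoking it for $g$ is precisely an instance of your identity $(\star)$ with $\psi(w) = \rho_\epsilon(z-w)$. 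The paper takes this extension for granted, while you supply it explicitly via the subdivision argument. Once $(\star)$ is available your deduction of the distributional identity is also more direct, bypassing the convolution step altogether.

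One small gap to close: in passing from $(\star)$ to the conclusion you choose a single closed square $Q\subset\Omega$ with $\operatorname{supp}\varphi\subset Q^\circ$, but such a square need not exist when $\Omega$ has nontrivial topology (an annulus, say). The fix is routine: either note that the distributional identity is local, so it suffices to check it for test functions supported in small axis-parallel squares contained in $\Omega$, or take a partition of unity subordinate to a finite cover of $\operatorname{supp}\varphi$ by such squares and sum.
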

\begin{proof}
One has to show that
\begin{equation}\label{distribucio}
-\int f(z)\,\dbar \varphi(z)\,dA(z) = \int \dbar
f(z)\,\varphi(z)\,dA(z),
\end{equation}
for each $\varphi \in C^\infty_0(\Omega).$ Take $\rho \in
C^\infty_0(\C)$ with support contained in $\{z:|z| \le 1\}$ and
$\int \rho(z) \,dA(z) = 1.$ Set
$\rho_\epsilon(z)=\frac{1}{\epsilon^2} \rho(\frac{z}{\epsilon})$ and
$f_\epsilon = f * \rho_\epsilon.$ Then $\dbar(f*\rho_\epsilon)= f*
\dbar \rho_\epsilon, $ but it is not clear that this coincides with
$\dbar f* \rho_\epsilon.$ Given $\delta> 0$ set $\Omega_\delta =\{z
\in \Omega : \operatorname{dist}(z, \partial \Omega) > \delta\}$ and
take $\delta$ small enough so that the support of $\varphi$ is
contained in $\Omega_\delta .$  Let $\epsilon < \delta.$ Then
\begin{equation}\label{distribucions}
\begin{split}
-\int f_\epsilon(z)\,\dbar \varphi(z)\,dA(z)   & = - \int
(f*\rho_\epsilon)(z)\, \dbar \varphi(z)\,dA(z),
\\*[5pt] & =  \int
\dbar (f*\rho_\epsilon)(z)\,  \varphi(z)\,dA(z)
\\*[5pt] & = \int
 (f* \dbar \rho_\epsilon)(z) \, \varphi(z)\,dA(z).
\end{split}
\end{equation}
We show now that
\begin{equation}\label{quadrat} (f* \dbar
\rho_\epsilon)(z)= (\dbar f * \rho_\epsilon)(z), \quad z \in
\Omega_\delta.
\end{equation}
 Inserting this in \eqref{distribucions} and letting $\epsilon
\rightarrow 0$ yields \eqref{distribucio}. To prove \eqref{quadrat}
let $Q$ be the closed square with sides parallel to the coordinate
axis with center $z$ and side length $2\epsilon.$ Then $Q \subset
\Omega.$ The function $g(w)= f(w) \rho_\epsilon(z-w)$ vanishes on
the boundary of $Q.$ Applying \eqref{Fesq2} to $Q$ and $g$ we get
\eqref{quadrat}.
\end{proof}

\begin{gracies}
The authors are grateful to J.Bruna and M.Melnikov for some useful
conversations on the subject.

This work was partially supported by the grants 2009SGR420
(Generalitat de Catalunya) and  MTM2010-15657 (Ministerio de
Educaci\'{o}n y Ciencia).
\end{gracies}

\vspace*{.55cm}

\begin{tabular}{l}
Juli\`{a} Cuf\'{\i} and Joan Verdera\\
Departament de Matem\`{a}tiques\\
Universitat Aut\`{o}noma de Barcelona\\
08193 Bellaterra, Barcelona, Catalonia\\
{\it E-mail:} {\tt jcufi@mat.uab.cat}\\
{\it E-mail:} {\tt jvm@mat.uab.cat}
\end{tabular}
\end{document}